\tikzstyle{vertex}=[circle, draw, inner sep=0pt, minimum size=6pt]
\theoremstyle{plain}
\newtheorem{thm}{Theorem}[section]
\newtheorem{lem}[thm]{Lemma}
\theoremstyle{definition}
\theoremstyle{remark} \tolerance=10000 \hbadness=10000
\def \ni{\noindent}
\author{Jismy Varghese \footnote{Email : kvjismy@gmail.com}\\ School of Computer Science\\
DePaul Institute of Science and Technology\\ Angamaly - 683 573\\ \vspace{0.3cm} Kerala, India.\\
Aparna Lakshmanan S.\footnote{E-mail :
aparnaren@gmail.com}\\
Department of Mathematics\\St.Xavier's College for Women\\Aluva -
683 101\\\vspace{0.2cm} Kerala, India.
}
\title{\textbf{Impact of Vertex Addition on Italian Domination Number}}
\date{}
\begin{document}
	
	\maketitle
	
	\begin{abstract}
		
		\ni\line(1,0){360}\\  An Italian dominating function (IDF), of a graph G is a function $ f: V(G) \rightarrow \{0,1,2\} $ satisfying the condition that for every $ v\in V(G) $ with $ f(v) = 0, \sum_{ u\in N(v)} f(u) \geq 2. $ The weight of an IDF on $G$ is the sum $ f(V)= \sum_{v\in V(G)}f(v) $ and Italian domination number, $ \gamma_I(G) $ is the minimum weight of an IDF. In this paper, we study the impact of corona operator and addition of twins on Italian domination number.
		   \\
		\ni\line(1,0){360}\\
		\ni {\bf Keywords:} Italian domination number, corona operator, twin vertex.  \\
		
		\ni {\bf AMS Subject Classification:} {\bf primary: 05C69}, {\bf secondary: 05C76.} \\
		\ni\line(1,0){360}\\
	\end{abstract}

\section{Introduction}

Let G be a simple graph with vertex set $V(G)$ and edge set $ E(G) $. If there is no ambiguity in the choice of G then we write $V(G)$ and $ E(G) $ as $ V $ and $ E $ respectively. The open neighborhood of a vertex $v \in V $ is the set $ N(v)= \{u: uv \in E\} $ and its closed neighborhood is $ N[v] = N(v) \cup \{v\}.$    A subset $ S \subseteq V$ of vertices in a graph is called a dominating set if every $ v\in V$ is either an element of $S$ or is adjacent to an element of $S$ \cite{Tha}. The domination number, $\gamma(G) $ is the minimum cardinality of a dominating set of G. \\

  An Italian dominating function, in short IDF, of a graph G is a function $f: V \rightarrow \{0,1,2\} $ which satisfies the condition that for every $ v\in V $ with $ f(v) = 0,$ $ \sum_{ u\in N(v)} f(u) \geq 2, $   i.e; either $v$ is adjacent to a vertex $u$ with $ f(u) = 2 $ or to at least two vertices $x$ and $y$ with $ f(x) = f(y) = 1 $. The weight of an Italian dominating function is $f(V)=\sum_{u \in V}f(u)$. The Italian domination number, $ \gamma_I(G) $ is the minimum weight of an Italian dominating function. An IDF with weight $ \gamma_I(G) $ is called $ \gamma_I $-function. Let $ V_i^f $ or simply $ V_i $, denote the set of vertices assigned $ i $ by the function $f$. The Italian domination number was first introduced in  \cite{Mustha} with the name Roman- \{2\}-domination. For any graph $ G $, the Italian domination number is bounded by $\gamma(G)\leq \gamma_I(G) \leq\gamma_R(G)\leq 2\gamma(G)  $ which was given in \cite{Mustha, Mac}. M. A. Henning and W. F. Klostermeyer  studied the Italian domination number of trees \cite{Henn}. The Italian domination number of generalized Petersen graph, $ P(n,3) $ is found in \cite{Gao}. In \cite{Jis}, it is proved that $ \gamma_I(G)+1 \leq \gamma_I(M(G))\leq \gamma_I(G)+2,  $ where $ M(G) $ is the Mycielskian graph of $ G. $ It is also proved that $ \gamma_I(S(K_n,2)) =2n-1 $ and $ n^{t-2}\alpha(G)\gamma_I(G)\leq \gamma_I(S(G,t)) \leq n^{t-2}(n-\gamma_I(G)-|V_2|-E_2) $ where $ S(G,t)  $ is the Sierpinski graph of $ G $, $ \alpha(G) $ is the  independence number of $ G $ and $ E_2 $ is the set of non-isolated vertices in $ <V_2>  $ \cite{Jis}. To know more about Italian domination number, the interested readers can refer \cite{Ali}, \cite{Gao2}, \cite{Haji}, \cite{Hay}, \cite{Ji}, \cite{Pour}.\\

 The corona of two graphs $G_1 = (V_1;E_1)$ and $G_2 = (V_2;E_2)$, denoted by $G_1\odot G_2,$ is the graph obtained by taking one copy of $G_1$ and $\lvert V_1\lvert$ copies of $G_2$, and then joining the $i^{th}$ vertex of $G_1$ to every vertex in the $i^{th}$ copy of $G_2$. We denote the complete graph on $n$ vertices by $ K_n $. If $G_2$ is $K_1$, then $G_1\odot G_2$ is $G_1$ together with one pendent vertex each attached to all the vertices of $G_1$. A false twin of a vertex $ u $ is a vertex $ u' $ which is adjacent to all vertices in $ N(u)$. A true twin of a vertex $ u $ is a vertex $ u' $ which is adjacent to all vertices in $ N[u] $. Two vertices $ u $ and $ u' $ are said to be twins if either they are true twins or false twins. For any graph theoretic terminology and notations not mentioned here, the readers may refer to \cite{Bal}.\\

The following result is useful in this paper.\\
\begin{thm} \cite{Mustha} \label{thm1}
	For the class of $P_n$, $ \gamma_I(P_n)= \lceil \frac{n+1}{2}\rceil. $	
\end{thm}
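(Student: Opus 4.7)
The plan is to establish both bounds: the upper bound $\gamma_I(P_n) \le \lceil (n+1)/2 \rceil$ via an explicit construction, and the matching lower bound via induction on $n$.

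For the upper bound, label the vertices $v_1, v_2, \ldots, v_n$ consecutively along $P_n$ and define $f(v_i) = 1$ for every odd $i$; if $n$ is even, additionally set $f(v_n) = 1$. Every vertex assigned $0$ is some $v_{2j}$ with $2j < n$, and both of its neighbors $v_{2j-1}, v_{2j+1}$ carry weight $1$, so $f$ is an IDF. Its weight is $\lceil n/2 \rceil$ when $n$ is odd and $n/2 + 1$ when $n$ is even; both expressions equal $\lceil (n+1)/2 \rceil$, giving the upper bound.

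For the lower bound I would induct on $n$, with base cases $n = 1$ and $n = 2$, and reduce the inductive step for $n \ge 3$ to the claim $\gamma_I(P_n) \ge \gamma_I(P_{n-2}) + 1$. Given a $\gamma_I$-function $f$ of $P_n$, I would construct an IDF $f'$ of $P_{n-2} = v_1 v_2 \cdots v_{n-2}$ with $f'(V(P_{n-2})) \le f(V) - 1$ by setting $f'(v_i) = f(v_i)$ for $i \le n-3$ and $f'(v_{n-2}) = \max(f(v_{n-2}), 1)$. Checking that $f'$ is an IDF of $P_{n-2}$ is routine: no constraint applies at $v_{n-2}$ since $f'(v_{n-2}) \ge 1$, and the constraint at $v_{n-3}$ follows from the original constraint in $P_n$ together with $f'(v_{n-2}) \ge f(v_{n-2})$.

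The main obstacle is the weight inequality $f'(V(P_{n-2})) \le f(V) - 1$, which requires the weight $f(v_{n-1}) + f(v_n)$ removed from the last two vertices to dominate the at-most-$1$ boost placed on $v_{n-2}$. The boost costs exactly $1$ when $f(v_{n-2}) = 0$, and in that case a direct check eliminates the pairs $(f(v_{n-1}), f(v_n)) \in \{(0,0), (0,1), (1,0)\}$: each forces an IDF-violation at $v_{n-1}$ or $v_n$ in $P_n$. Hence $f(v_{n-1}) + f(v_n) \ge 2$ whenever $f(v_{n-2}) = 0$. When $f(v_{n-2}) \ge 1$ the boost is free, and only $f(v_{n-1}) + f(v_n) \ge 1$ is needed, which holds because the endpoint constraint at $v_n$ forbids both $v_{n-1}$ and $v_n$ from being $0$. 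Iterating this reduction from the base cases yields $\gamma_I(P_n) \ge \lceil (n+1)/2 \rceil$, completing the proof.
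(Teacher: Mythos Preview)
Your argument is correct: the explicit IDF gives the upper bound, and the inductive reduction $\gamma_I(P_n)\ge\gamma_I(P_{n-2})+1$ is sound---the case analysis on $(f(v_{n-1}),f(v_n))$ when $f(v_{n-2})=0$ correctly rules out all pairs of total weight at most $1$, and the endpoint constraint at $v_n$ handles the remaining case.

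There is, however, nothing in the paper to compare against. Theorem~\ref{thm1} is quoted from \cite{Mustha} and used as background; the authors do not supply their own proof. So your write-up is not an alternative to the paper's argument but rather a self-contained verification of a cited result. If you intend to include it, it would sit as an appendix-style proof of a known fact rather than as a replacement for anything in the body of the paper.
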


\section{Corona Operator on Italian Domination}

In this section, we find the value of Italian domination number of corona operator of any two graphs $G$ and $H$, where $H \ncong K_1$. The upper and lower bounds for $\gamma_I(G \odot K_1)$ is obtained and the corresponding realization problem is also settled. Also, the exact value of $\gamma_I(G \odot K_1)$, when $G$ belongs to some special classes of graphs is obtained.

\begin{lem} \label{lem1}
	Let $ G $ be a graph and $ u $ be a pendent vertex of $ G. $ Then there exists a $ \gamma_I$-function $ f $ of $ G $ in which $ f(u) \neq 2. $
\end{lem}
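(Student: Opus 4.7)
The plan is to run an exchange argument: start with an arbitrary $\gamma_I$-function $f$ of $G$ and, assuming $f(u)=2$, modify $f$ to produce another function $f'$ of weight at most $f(V)$ with $f'(u)\neq 2$. Throughout, let $v$ denote the unique neighbor of the pendant vertex $u$. The key observation I will use repeatedly is that, because $\deg(u)=1$, altering the value of $u$ only ever affects the weighted neighborhood sum at $v$; it cannot hurt the IDF condition at any other vertex.

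First, I would dispose of the subcase $f(v)=2$. In that case let $f'$ agree with $f$ except that $f'(u)=0$. Then $f'$ is still an IDF: the only new constraint is at $u$, where $N(u)=\{v\}$ and $f'(v)=2\geq 2$; all other neighborhood sums are unchanged. But $f'(V)=f(V)-2$, contradicting the minimality of $f$. Hence this subcase never arises.

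So I may assume $f(v)\in\{0,1\}$. Here I would define
\[
f'(u)=1,\qquad f'(v)=f(v)+1,\qquad f'(x)=f(x)\text{ for }x\notin\{u,v\}.
\]
Clearly $f'(V)=f(V)$, so it is enough to verify that $f'$ is an IDF, after which $f'$ is a $\gamma_I$-function with $f'(u)=1\neq 2$. The verification amounts to checking, for every $w$ with $f'(w)=0$, that $\sum_{x\in N(w)}f'(x)\geq 2$. Neither $u$ nor $v$ has $f'$-value $0$, so $w\notin\{u,v\}$; moreover, since $u$ is pendant, $u\notin N(w)$, which means the change at $u$ is invisible to $w$. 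The only other change from $f$ to $f'$ is the increase $f'(v)\geq f(v)$, which can only help. Hence $\sum_{x\in N(w)}f'(x)\geq\sum_{x\in N(w)}f(x)\geq 2$, as $f$ was an IDF.

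The only real obstacle is the bookkeeping in the last step, i.e.\ checking that decreasing $f(u)$ from $2$ to $1$ does not break any IDF constraint; this is exactly where pendancy of $u$ is used, via the fact that $v$ is the only vertex that ``sees'' $u$. Once that is clean, the two subcases above cover everything and the lemma follows.
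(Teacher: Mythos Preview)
Your proof is correct and follows essentially the same exchange argument as the paper: shift weight from the pendant vertex $u$ to its unique neighbor $v$, then check that the modified function is still an IDF of the same weight. The paper is terser---it asserts $f(v)=0$ by minimality and then reassigns $f(u)=f(v)=1$ (or $f(u)=0$, $f(v)=2$)---whereas you treat $f(v)\in\{0,1\}$ uniformly and verify the IDF condition more carefully; but the underlying idea is the same.
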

\begin{proof}
	If possibe assume that there exists a $ \gamma_I $-function with $ f(u) = 2. $ Note that  the weight of neighbor of $ u, $ say $ v $ is zero, due to the minimality of $ f. $ Then we can reassign $ f(u)=f(v)=1 $ or $f(u)=0, f(v)=2,  $ which is again a $ \gamma_I $-function on $ G $ with $ f(u) \neq 2. $ Hence the lemma.
\end{proof}

In this paper, here onwards, we consider $ \gamma_I$-functions for which the weight of a pendent vertex is not equal to 2.

\begin{thm} \label{thm2}
	For every graph $G$ and $H \ncong K_1,\ \gamma_I(G\odot H)=2n,$ where $n$ is the order of $G$.
\end{thm}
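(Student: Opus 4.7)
The plan is to prove matching upper and lower bounds of $2n$ on $\gamma_I(G\odot H)$.

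For the upper bound, I would simply exhibit an explicit IDF: assign weight $2$ to every vertex of the base copy of $G$ and weight $0$ to every vertex of each attached copy of $H$. Since each vertex $w$ in the $i$-th copy of $H$ is adjacent to the corresponding vertex $v_i$ of $G$, which carries weight $2$, the Italian condition $\sum_{u\in N(w)}f(u)\geq 2$ holds; vertices of $G$ carry positive weight and impose no condition. The total weight is exactly $2n$.

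For the lower bound, the plan is to partition $V(G\odot H)$ into the $n$ blocks $S_i=\{v_i\}\cup V(H_i)$, where $v_i$ is the $i$-th vertex of $G$ and $H_i$ the $i$-th attached copy of $H$. The crucial structural observation is that by the definition of the corona, every vertex $w\in V(H_i)$ has its entire neighborhood contained in $S_i\setminus\{w\}$. Therefore, if I can show that $f(S_i)\geq 2$ for every IDF $f$ and every $i$, summing over $i$ gives $\gamma_I(G\odot H)\geq 2n$ and the proof is complete.

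I would prove $f(S_i)\geq 2$ by contradiction, assuming $f(S_i)\leq 1$. If $f(S_i)=0$, pick any $w\in V(H_i)$ (nonempty since $H\ncong K_1$); then $f(w)=0$ but $\sum_{u\in N(w)}f(u)\leq f(S_i)=0$, violating the IDF condition. If $f(S_i)=1$, let $z$ be the unique vertex of positive weight in $S_i$. When $z=v_i$, any $w\in V(H_i)$ satisfies $f(w)=0$ and its neighborhood sum equals $f(v_i)=1<2$; when $z\in V(H_i)$, the hypothesis $|V(H)|\geq 2$ yields some $w\in V(H_i)\setminus\{z\}$ with $f(w)=0$ whose neighborhood sum is at most $f(z)=1<2$. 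Either subcase contradicts the IDF property.

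The main point requiring care is the exploitation of $H\ncong K_1$ to guarantee a second vertex inside each $H_i$; this is precisely what forces $f(S_i)\geq 2$ and explains why the theorem excludes $H\cong K_1$ (in which case each $S_i$ reduces to an edge with a pendant, and Lemma~\ref{lem1} would govern the analysis instead). The rest of the argument is a routine case split, so I expect no substantial obstacle beyond formulating the partition cleanly.
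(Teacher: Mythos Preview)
Your proof is correct and follows essentially the same approach as the paper: the same IDF gives the upper bound, and the lower bound comes from observing that each of the $n$ disjoint blocks $\{v_i\}\cup V(H_i)$ must carry weight at least $2$. Your version is simply more explicit---the paper compresses the lower bound into a single sentence (``there are $n$ mutually exclusive copies of $H$ each of which requires at least weight $2$''), whereas you spell out the case analysis and make precise where the hypothesis $H\ncong K_1$ enters.
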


\begin{proof}
Define an Italian domination function $f$ of $ G\odot H $ as follows.
\[
f(v) =
\begin{cases}
2,\  \ for\ v\in V(G),\\
0,\  \ otherwise.\\
\end{cases}
\]
Therefore, $ \gamma_I(G\odot H)\leq 2n.$ There are $n$ mutually exclusive copies of $H$ each of which requires at least weight 2 in IDF. So $ \gamma_I(G\odot H) \geq 2n.$ Hence the theorem.

\end{proof}

\begin{thm} \label{thm3}
	For any graph $G$, $n+1 \leq \gamma_I(G\odot K_1)\leq 2n.$
	
\end{thm}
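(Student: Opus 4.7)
The plan is to handle the two inequalities independently, with the upper bound being essentially the same construction used in the proof of Theorem~\ref{thm2} and the lower bound requiring a short combinatorial argument that exploits Lemma~\ref{lem1}.

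For the upper bound $\gamma_I(G\odot K_1)\leq 2n$, I would simply reuse the IDF from Theorem~\ref{thm2}: assign $f(v)=2$ for each $v\in V(G)$ and $f(v')=0$ for each pendant $v'$ attached to $v$. Every pendant has its unique neighbor in $V(G)$ with weight $2$, so the condition is satisfied, and the total weight is $2n$.

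For the lower bound $\gamma_I(G\odot K_1)\geq n+1$, let $f$ be a $\gamma_I$-function on $G\odot K_1$. By Lemma~\ref{lem1} (applied copy by copy to each pendant), I may assume $f(v')\neq 2$ for every pendant $v'$, so $f(v')\in\{0,1\}$. For each $v\in V(G)$ denote by $v'$ its pendant and partition $V(G)$ into $A_i=\{v:f(v)=i\}$ for $i\in\{0,1,2\}$. The key observation is that whenever $v\notin A_2$, the only neighbor of $v'$ is $v$, so if $f(v')=0$ we would need $f(v)=2$; hence $f(v')\geq 1$ for every $v\in A_0\cup A_1$. Summing gives
\begin{equation*}
w(f) \;\geq\; |A_1|+2|A_2| + (|A_0|+|A_1|) \;=\; n + |A_1| + |A_2|.
\end{equation*}

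It remains to exclude the case $|A_1|=|A_2|=0$. If every vertex of $G$ had weight $0$, then for any $v\in V(G)$ the neighbors of $v$ in $G\odot K_1$ are $v'$ together with the $G$-neighbors of $v$; the latter all carry weight $0$ while $f(v')\leq 1$, so $\sum_{u\in N(v)}f(u)\leq 1<2$, contradicting that $f$ is an IDF. Hence $|A_1|+|A_2|\geq 1$ and $w(f)\geq n+1$. The only mildly delicate point is the invocation of Lemma~\ref{lem1} simultaneously on all pendants, which is legitimate because the swap described in the lemma is local to a pendant and its unique neighbor and therefore preserves the ``no pendant carries value $2$'' property across iterations.
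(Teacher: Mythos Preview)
Your proof is correct and follows essentially the same approach as the paper: Lemma~\ref{lem1} is invoked to assume no pendant carries weight $2$, then the pendant/support pairing forces total weight at least $n$, and finally one observes that the $G$-vertices cannot all have weight $0$. The only cosmetic differences are that the paper's upper-bound IDF assigns $2$ to the pendants rather than to $V(G)$, and your lower-bound bookkeeping via the partition $A_0,A_1,A_2$ is a bit more explicit than the paper's two-case split (all pendants in $V_1$ versus some pendant in $V_0$).
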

\begin{proof}
	Let $V(G)= \{v_1,v_2,...v_n\}$ and let $ u_i $ be the leaf neighbor of $ v_i $ in $ G\odot K_1. $ Define an IDF of $ G\odot K_1 $ as follows.
	\[
	f(u) =
	\begin{cases}
	2,\  \ for\ u=u_i,\\
	0,\  \ otherwise.\\
	\end{cases}
	\]
	Therefore, $ \gamma_I(G)\leq 2n. $ To prove the left inequality, let $ f $ be any IDF of $ G\odot K_1. $ By Lemma \ref{lem1} each $ u_i $ must be either in $ V_1  $ or adjacent to a vertex in $ V_2. $ If $ u_i \in V_1, $ for all $ i=1,2,3,...n, $ none of the vertices in $ G $ can be italian dominated by $ u_i $ alone. Therefore, $ f(V) \geq n+1. $ If $ u_i \notin V_1$ for some $ i $, then $ u_i $ is adjacent to a vertex in $ V_2 $ which further increases the value of $ f(V). $ Hence the theorem.
	
\end{proof}
\begin{thm} \label{thm4}
	Any positive integer $ a $ is realizable as the Italian domination number of $ G\odot K_1, $ for some $ G $ if and only if $ n+1 \leq a \leq 2n,  $ where n is the number of vertices in $ G. $
\end{thm}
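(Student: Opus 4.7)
The ``only if'' direction is immediate from Theorem \ref{thm3}. For the ``if'' direction, I plan to exhibit, for each integer $a$ with $n+1 \le a \le 2n$, a graph $G$ on $n$ vertices with $\gamma_I(G \odot K_1) = a$. Set $m = 2n + 1 - a$, so $m \in \{1, 2, \dots, n\}$, and take $G := K_m \cup \overline{K_{n-m}}$, the disjoint union of a clique on $m$ vertices with $n - m$ isolated vertices. Then $G \odot K_1$ splits into a single component isomorphic to $K_m \odot K_1$ together with $n - m$ disjoint copies of $K_2$ (one per isolated vertex of $G$, paired with its pendant). Because $\gamma_I$ is additive across connected components and $\gamma_I(K_2) = 2$, the whole realization reduces to the one key identity
\[
\gamma_I(K_m \odot K_1) = m + 1,
\]
for then $\gamma_I(G \odot K_1) = (m+1) + 2(n-m) = 2n - m + 1 = a$.

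To establish this identity, label the vertices of $K_m$ by $v_1, \dots, v_m$ with attached pendants $u_1, \dots, u_m$. For the upper bound, I would assign weight $1$ to every pendant and to $v_1$, and weight $0$ elsewhere: every $v_j$ with $j \ne 1$ has weight $0$ but sees $u_j$ and $v_1$, summing to $2$, so this is an IDF of weight $m+1$. For the lower bound, take any $\gamma_I$-function $f$; by Lemma \ref{lem1} we may assume $f(u_i) \in \{0,1\}$ for every $i$. Let $k = |\{i : f(u_i) = 0\}|$; since each such $u_i$ has only $v_i$ as neighbor, the IDF condition forces $f(v_i) = 2$. If $k \ge 1$, then just the contribution of these $k$ forced pairs together with the remaining $m-k$ pendants is $2k + (m-k) = m + k \ge m + 1$. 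If $k = 0$, the pendants contribute $m$ in total, and not every $v_j$ can carry weight $0$, since otherwise any $v_j$ of weight $0$ would see only $u_j$ of weight $1$ and other $v_i$ of weight $0$ in its open neighborhood, summing to $1 < 2$; hence some $v_j$ carries weight $\ge 1$, and again $f(V) \ge m + 1$.

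The principal obstacle is the lower bound $\gamma_I(K_m \odot K_1) \ge m + 1$, which needs the small case split on $k$ above; once this is in hand, the rest is just additivity of $\gamma_I$ over connected components together with the arithmetic $a \mapsto m = 2n+1-a$.
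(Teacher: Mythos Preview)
Your argument is correct. Both you and the paper realize the target value by padding a ``core'' graph that has a universal vertex with the right number of isolated vertices; the paper takes the star $K_{1,m}\cup(n-m-1)K_1$ as its core-plus-padding family, while you use the clique $K_m\cup\overline{K_{n-m}}$. Since both $K_{1,m}$ and $K_m$ possess a universal vertex, the mechanism is identical---the corona of the core has Italian domination number one more than its order, and each isolated vertex contributes a further $2$. The main difference is one of rigor: the paper simply exhibits an IDF of weight $2n-m$ and asserts it is a $\gamma_I$-function, whereas you supply a self-contained lower-bound argument for $\gamma_I(K_m\odot K_1)\ge m+1$ via the case split on $k$. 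Your route is thus slightly more complete, though neither construction offers a real advantage over the other, and in fact both are instances of the later Theorem~\ref{thm5}.
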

\begin{proof}
 Let $ G $ be a graph with $ |V(G)| = n $. If $ \gamma_I(G\odot K_1)= a $ then by theorem \ref{thm3}, $ n+1\leq a \leq 2n. $
 To prove the converse, let $ G $ be the graph $ K_{1,m} \cup (n-m-1)K_1$ where $ 0\leq m \leq n-1. $ Let $ v_1,v_2,...v_{m+1} $ be the vertices of $ K_{1,m} $ in which $ v_1 $ is the universal vertex and $ v_{m+2}, v_{m+3},...,v_n $ be the isolated vertices in $ G. $ Let $ v_i' $ be the leaf neighbor of $ v_i $ in $ G\odot K_1. $ Define an IDF $ f $ on $ V(G\odot K_1) $ as follows.
  \[
  f(u) =
  \begin{cases}
  2,\  \ for\ u=v_i,\ i=1,m+2,m+3,...,n,\\
  1,\ \ for\ u=v_i',\ i=2,3,...,m+1,\\
  0,\  \ otherwise.\\
  \end{cases}
  \]
  Therefore, $ f $ is a $ \gamma_I $-function with weight $ 2(n-m)+m=2n-m,\ 0\leq m \leq n-1. $ So $ \gamma_I(G\odot K_1) $ varies from $ n+1 $ to $ 2n. $ Hence the theorem,
  \end{proof}

\begin{thm} \label{thm5}
$ 	\gamma_I(G\odot K_1) = n+1$ if and only if  $ G $ has a universal vertex.
\end{thm}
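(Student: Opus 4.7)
The plan is to prove both implications separately, using Theorem \ref{thm3} to supply the lower bound $\gamma_I(G\odot K_1)\geq n+1$ automatically.

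For the sufficiency, suppose $G$ has a universal vertex $v$. I would exhibit an explicit IDF $f$ on $G\odot K_1$ of weight $n+1$: assign $f(v)=2$, assign $f(u_i)=1$ for every pendent neighbor $u_i$ whose attachment point $v_i\in V(G)$ is different from $v$, and put $f$ equal to $0$ on every other vertex. The verification is routine: every $v_i\neq v$ has a zero and is dominated by $v$ (weight $2$); the pendent $u_v$ attached to $v$ itself is also dominated by $v$; and the remaining pendents $u_i$ carry weight $1$, so they satisfy the IDF condition trivially. The total weight is $2+(n-1)=n+1$, so $\gamma_I(G\odot K_1)\leq n+1$, and combined with Theorem \ref{thm3} we get equality.

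For the necessity, let $f$ be a $\gamma_I$-function of $G\odot K_1$ with weight $n+1$. By Lemma \ref{lem1} I may assume $f(u_i)\in\{0,1\}$ for every pendent $u_i$. Partition the indices into $A=\{i:f(u_i)=1\}$ and $B=\{i:f(u_i)=0\}$. For $i\in B$ the pendent $u_i$ has only $v_i$ as a neighbor, so the IDF condition forces $f(v_i)=2$. Writing $a=|A|$, this gives
\[
n+1 \;=\; f(V) \;=\; a + 2(n-a) + \sum_{i\in A} f(v_i) \;=\; 2n - a + \sum_{i\in A} f(v_i),
\]
hence $\sum_{i\in A} f(v_i)=a-n+1\geq 0$, forcing $a\in\{n-1,n\}$.

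The main work is then a short case analysis, and this is the step I expect to be the most delicate because one must identify the universal vertex in each sub-case. If $a=n$, then exactly one index $j$ has $f(v_j)=1$ and all other $v_k$ carry weight $0$; since the only positive-weight vertex of $G$ sitting in $V(G)$ is $v_j$, each $v_k$ ($k\neq j$) can only reach IDF-weight $2$ in its neighborhood by having $v_j$ as a neighbor in $G$, making $v_j$ universal. If $a=n-1$, then $B=\{j\}$ for some $j$, with $f(v_j)=2$ and all other $v_i$ carrying weight $0$; the same neighborhood count at each $v_i$ with $i\neq j$ forces $v_j$ to be adjacent to every other vertex of $G$, again producing a universal vertex. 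Either way $G$ admits a universal vertex, which completes the proof.
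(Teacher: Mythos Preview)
Your proof is correct and follows essentially the same approach as the paper: the sufficiency is handled by the identical explicit IDF, and the necessity proceeds by counting how many pendents carry weight~$1$ versus~$0$ (your $a$ is the paper's $k$) to force $a\in\{n-1,n\}$. Your treatment of the necessity is in fact more complete than the paper's, since you carry out the case analysis for both $a=n-1$ and $a=n$ and explicitly identify the universal vertex in each, whereas the paper argues by contradiction and only spells out the case $k=n-1$.
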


\begin{proof}
	Let $ G $ be a graph with vertices $ v_1,v_2,v_3,...v_n $ and let $ u_i $ be the leaf neighbor of $ v_i $. Let $ v_1 $ be the universal vertex in $ G. $ Define an IDF of $ G\odot K_1 $ as follows.
	\[
	g(v) =
	\begin{cases}
	2,\  \ \ v=v_1,\\
	1,\ \   \ v= u_i, i=2,3,...,n,\\
	0,\  \ otherwise.\\
	\end{cases}
	\]
	Then $ g(V)=n+1 $ which is the minimum possible and hence $ 	\gamma_I(G\odot K_1) = n+1.$\\
	
	To prove the converse part, assume that $ \gamma_I(G\odot K_1) = n+1.$ Let $ f $ be a $ \gamma_I $-function of $ G\odot K_1. $ If possible assume that $ G $ does not have a universal vertex. Out of $ n $ pendent vertices in $ G\odot K_1 $, let $ k $ vertices be in $ V_1^f $ so that the remaining $ n-k $ pendent vertices are  adjacent to vertices in $ V_2^f. $ Then $ f(V)= n+1 \geq k+2(n-k) = 2n-k. $ Therefore, $ k\geq n-1. $ If $ k=n-1 $ then there exists a pendent vertex $ u_l $ which is adjacent to a vertex in $ V_2^f. $  If $ u_l $ is adjacent to a vertex in $ V_2^f $ then, since $ u_l $ is not a universal vertex we need more vertices with non zero weight to Italian dominate vertices in $ G $, which is a contradiction to the fact that $ \gamma_I(G\odot K_1) = n+1.  $ Therefore our assumption is wrong. Hence $ G $ has a universal vertex.
 \end{proof}
\begin{thm} \label{thm6}
$	\gamma_I(G\odot K_1) = 2n$ if and only if $ G= K_n^c. $
\end{thm}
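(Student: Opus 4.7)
The plan is to prove both implications of the biconditional. The forward direction follows quickly from the structure of $K_n^c\odot K_1$ as a disjoint union of $K_2$'s, while the converse is best handled by contrapositive: if $G$ contains any edge at all, I will exhibit an explicit IDF of $G\odot K_1$ of weight strictly less than $2n$.

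For the forward direction, I would observe that when $G=K_n^c$ the graph $G\odot K_1$ is the vertex-disjoint union of $n$ copies of $K_2$, namely the pairs $\{v_i,u_i\}$. Since $K_2\cong P_2$, Theorem~\ref{thm1} gives $\gamma_I(K_2)=\lceil 3/2\rceil=2$, and since an IDF of a disjoint union restricts to an IDF on each component, we get $\gamma_I(G\odot K_1)\geq 2n$. Combined with the upper bound $\gamma_I(G\odot K_1)\leq 2n$ from Theorem~\ref{thm3}, equality follows.

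For the converse, suppose $\gamma_I(G\odot K_1)=2n$ and, toward a contradiction, that $G$ contains an edge $v_1v_2$. Define $f\colon V(G\odot K_1)\to\{0,1,2\}$ by $f(v_1)=2$, $f(u_2)=1$, $f(u_i)=2$ for $i=3,\dots,n$, and $f=0$ on all remaining vertices. A short check confirms $f$ is an IDF: the pendant $u_1$ and the vertex $v_2$ are both adjacent to $v_1$ (the latter precisely because $v_1v_2\in E(G)$) and are therefore dominated by $f(v_1)=2$; each $v_i$ with $i\geq 3$ is dominated by its pendant $u_i$ with $f(u_i)=2$; and the vertices $v_1$, $u_2$, $u_3,\dots,u_n$ all carry positive weight and need no verification. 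The total weight is $2+1+2(n-2)=2n-1<2n$, contradicting the hypothesis and completing the contrapositive.

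The main obstacle is essentially nonexistent: the whole argument rests on the single observation that a single edge $v_1v_2$ of $G$ lets one replace two ``leaf-weight-$2$'' assignments with one ``center-weight-$2$'' plus one ``leaf-weight-$1$'' assignment, saving one unit. The only step that genuinely uses the edge is the IDF verification at $v_2$, and this is worth flagging in the writeup so the role of the hypothesis is transparent. No case analysis or extremal counting is needed beyond the explicit construction.
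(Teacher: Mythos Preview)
Your proof is correct and follows essentially the same strategy as the paper: for the nontrivial direction you assume $G$ contains an edge and build an explicit IDF of weight $2n-1$ to derive a contradiction. The only cosmetic difference is that the paper places the weight~$2$ on the vertices $v_k$ of $G$ (for $k\neq i,j$) rather than on the pendants $u_k$ as you do, and the paper dismisses the direction $G=K_n^c\Rightarrow\gamma_I(G\odot K_1)=2n$ as trivial where you spell it out via Theorem~\ref{thm1} and additivity over components.
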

\begin{proof}
	Let $ v_1,v_2,...v_n $ be the vertices of $ G $ and let  $ u_i $ be the pendent vertex adjacent to $ v_i  $ in $ G\odot K_1 $ for $ i=1,2,...,n. $ If possible assume that there exists an edge $ v_iv_j $ in $ G. $ Then $ u_iv_iv_ju_j $ is a $ P_4 $ in $ G\odot K_1 $ which can be Italian dominated by assigning 2 to $ v_i $ and 1 to $ u_j. $ Now, assigning 2 to every $ v_k $ for $ k=1,2,...,n $ and $ k\neq i,j $ gives an IDF of $ G\odot K_1 $ with weight $3+2(n-2) = 2n-1$, which contradicts the fact that $ \gamma_I(G\odot K_1) = 2n . $  Hence $ G  $ does not have an edge. i.e., $ G=nK_1= K_n^c. $ It is trivial that if $ G= K_n^c $ then $ \gamma_I(G\odot K_1) = 2n . $
\end{proof}

\begin{thm} \label{thm7}
	\[
	\gamma_I(K_{p,q}\odot K_1)=
	\begin{cases}
	p+q+1,\ p=1\ or\ q=1,\\
	p+q+2,\ otherwise.
	\end{cases}
	\]
	
	\end{thm}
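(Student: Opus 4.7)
The plan is to split into two cases based on whether $p=1$ (or $q=1$) and to lean heavily on Theorem \ref{thm5}, which characterizes exactly when $\gamma_I(G\odot K_1)=n+1$. Throughout, let $n=p+q=|V(K_{p,q})|$, and write $V(K_{p,q})=A\cup B$ with $|A|=p$, $|B|=q$.

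Case 1: $p=1$ or $q=1$. Without loss of generality $p=1$, so $K_{p,q}=K_{1,q}$ is a star whose center is adjacent to every other vertex, i.e.\ $K_{p,q}$ has a universal vertex. Theorem \ref{thm5} then delivers $\gamma_I(K_{p,q}\odot K_1)=n+1=p+q+1$ directly, with no further work needed.

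Case 2: $p,q\geq 2$. Here no vertex of $K_{p,q}$ is universal (a vertex of $A$ misses the other vertices of $A$, and similarly for $B$), so by the contrapositive of Theorem \ref{thm5} we immediately obtain the lower bound $\gamma_I(K_{p,q}\odot K_1)\geq n+2=p+q+2$. For the matching upper bound I would exhibit an explicit IDF: pick any $a\in A$ and $b\in B$, and (using the convention that $u_v$ is the pendant neighbor of $v$ in $K_{p,q}\odot K_1$) define
\[
f(x)=
\begin{cases}
2, & x\in\{a,b\},\\
1, & x=u_v \text{ for some } v\in V(K_{p,q})\setminus\{a,b\},\\
0, & \text{otherwise.}
\end{cases}
\]
One then verifies the IDF condition vertex by vertex: each $v\in A\setminus\{a\}$ has $f(v)=0$ but is adjacent to $b$ (weight $2$), and symmetrically for $v\in B\setminus\{b\}$; the pendants $u_a,u_b$ are adjacent to vertices of weight $2$; and every other pendant already carries weight $1$. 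The total weight is $2\cdot 2+(n-2)\cdot 1=p+q+2$, giving the desired upper bound.

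There is no substantial obstacle here: the nontrivial inputs (the lower bound $n+1$ and its tightness characterization) are both absorbed by Theorem \ref{thm5}, so the work reduces to the straightforward construction of a weight $(p+q+2)$ IDF and a routine verification that it dominates every vertex of $K_{p,q}\odot K_1$. The only point one needs to be a bit careful about is confirming that $K_{p,q}$ genuinely lacks a universal vertex when $p,q\geq 2$, which is what makes the gap between the two cases arise.
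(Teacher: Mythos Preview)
Your argument is correct, and your upper-bound construction in Case~2 is exactly the IDF the paper writes down. The difference is in how the lower bounds are handled. In Case~1 the paper does not appeal to Theorem~\ref{thm5}; it simply exhibits an IDF of weight $p+q+1$ and pairs this with the general bound $\gamma_I(G\odot K_1)\geq n+1$ from Theorem~\ref{thm3}. In Case~2 the paper again avoids Theorem~\ref{thm5} and instead argues directly by contradiction: assuming an IDF $g$ of weight $p+q+1$ exists, it counts how many of the $p+q$ pendants lie in $V_1^g$, forces this count to be at least $p+q-1$, and then shows in both subcases that some vertex of $K_{p,q}$ is left undominated. That argument is essentially a re-run of the proof of Theorem~\ref{thm5} specialized to $K_{p,q}$, so your route---invoking Theorem~\ref{thm5} as a black box---is shorter and more conceptual, while the paper's is more self-contained.

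One small tightening: the contrapositive of Theorem~\ref{thm5} alone only yields $\gamma_I(K_{p,q}\odot K_1)\neq n+1$; to turn this into $\gamma_I\geq n+2$ you are also using $\gamma_I\geq n+1$, which is Theorem~\ref{thm3} rather than Theorem~\ref{thm5}. It would be cleaner to cite both results explicitly at that step.
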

\begin{proof}
Let $ V(K_{p,q})= {u_1,u_2,...u_p,v_1,v_2,...,v_q} $ and $ u_i' $ be the leaf neighbor of $ u_i,\ i=1,2,...,p $ and $ v_j'  $ be that of $ v_j $ for $ j=1,2,...q $ in $ K_{p,q} \odot K_1. $ By the left inequality of \ref{thm3} $ p+q+1 \leq  \gamma_I(K_{p,q}\odot K_1). $\\

\ni{\bf Case1:} $p=1$ or $ q=1.$\\
Without loss of generality, let $ p=1. $ Define an IDF of $ K_{p,q}\odot K_1 $ as follows.
\[
f(u) =
\begin{cases}
2,\  \ for\ u=u_1,\\
1,\ \ for\ u=v_j',\ j=1,2,3,...,q,\\
0,\  \ otherwise.\\
\end{cases}
\]
The weight  $ f(V)= 2+q = p+q+1. $ Therefore, $\gamma_I(K_{p,q}\odot K_1) \leq p+q+1.$ Hence $\gamma_I(K_{p,q}\odot K_1) = p+q+1.$\\

\ni{\bf Case2:} $ p,q \geq 2. $\\
Define an IDF of $ K_{p,q}\odot K_1 $ as follows.
\[
f(u) =
\begin{cases}
2,\  \ for\ u=u_1\ and \ u=v_1,\\
1,\ \ for\ u=u_i'\ i=2,3,...,p,\ u=v_j',\ j=2,3,...,q,\\
0,\  \ otherwise.\\
\end{cases}
\]
The weight $ f(V)= 4+p-1+q-1 = p+q+2. $ Therefore, $\gamma_I(K_{p,q}\odot K_1) \leq p+q+2.$\\

 To prove the reverse inequality, if possible assume that there exists an IDF $ g $ of $ K_{p,q}\odot K_1 $ with weight $ p+q+1. $ Out of $p+q$ pendent vertices in $ K_{p,q} \odot K_1 $, let $ k $ vertices be in $ V_1^g. $ Note that, by Lemma \ref{lem1} we can always find a $ \gamma_I $-function in which pendent vertices are assigned values either 0 or 1. So that the remaining $ p+q-k $  pendent vertices are  adjacent to vertices in $ V_2^g. $ Hence the weight of $g$, $g(V)= p+q+1 \geq k+2(p+q-k).$ Hence, $ k\geq p+q-1. $ If $ k> p+q-1 $ then $ k=p+q $ so that all the pendent vertices are in $ V_1^g $ and none of them can be Italian dominated by any of the non-pendent vertices. Therefore, we need more vertices having non zero values under $ g $, which contradicts $ g=p+q+1. $ If $ k=p+q-1, $ then one pendent vertex, say $ x $, is adjacent to a vertex in $ V_2^g  $, say $ y $. Then $ y $ can not Italian dominate any of the vertices in its partite set of $ K_{p,q} $ containing $ y. $ Therefore, we need more vertices having non-zero values under $ g $ which is a contradiction. Hence the theorem.
\end{proof}

\begin{thm} \label{thm8}
	For any graph $ G,  \gamma_I((G\odot K_1)\odot K_1) = 3n $ where $ n=|V(G)|. $
\end{thm}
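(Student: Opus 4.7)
The plan is to prove both inequalities separately. Denote the vertices of $G$ by $v_1, \ldots, v_n$, and for each $i$ let $u_i$ be the pendant attached to $v_i$ in $G \odot K_1$. Then in $(G \odot K_1) \odot K_1$, each $u_i$ gains a new pendant neighbor $u_i'$ and each $v_i$ gains a new pendant neighbor $v_i'$, so the vertex set decomposes into $n$ ``arms'' $A_i = \{u_i', u_i, v_i, v_i'\}$ where $u_i' - u_i - v_i - v_i'$ is an induced $P_4$ and the $v_i$'s are further linked to each other according to $E(G)$.

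For the upper bound, I would exhibit the explicit IDF $f(v_i) = 2$ and $f(u_i') = 1$ for every $i$, with $f = 0$ elsewhere. A quick check of the four vertex types in each arm shows that $v_i$ and $u_i'$ have positive weight, $v_i'$ is dominated by $v_i$ (weight $2$), and $u_i$ is dominated jointly by $v_i$ (weight $2$). The total weight is $2n + n = 3n$, establishing $\gamma_I((G \odot K_1)\odot K_1) \leq 3n$.

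For the lower bound, the key claim is that every IDF $f$ satisfies $f(A_i) \geq 3$ for each $i$, after which summing over $i$ yields the result. Write $a_i = f(u_i')$, $b_i = f(u_i)$, $c_i = f(v_i)$, $d_i = f(v_i')$. Since $u_i'$ and $v_i'$ are pendants with unique neighbors $u_i$ and $v_i$ respectively, the IDF conditions force $a_i \geq 1$ or $b_i = 2$, and $d_i \geq 1$ or $c_i = 2$; together these give $a_i + b_i \geq 1$ and $c_i + d_i \geq 1$. The constraint at $u_i$ (whose only neighbors are $u_i'$ and $v_i$) adds $b_i \geq 1$ or $a_i + c_i \geq 2$. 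I would then finish by a short case split on $a_i \in \{0,1,2\}$: if $a_i = 0$ then $b_i = 2$ so $a_i + b_i = 2$ and $c_i + d_i \geq 1$; if $a_i = 2$ then $a_i + b_i \geq 2$ and $c_i + d_i \geq 1$; if $a_i = 1$, either $b_i \geq 1$ giving $a_i + b_i \geq 2$, or $b_i = 0$ which forces $c_i \geq 1$, combined with $c_i + d_i \geq 1$ in fact to $c_i + d_i \geq 2$ (since $d_i \geq 1$ or $c_i = 2$). In every case the total of the arm is at least $3$.

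I do not anticipate a serious obstacle; the only subtlety worth emphasizing is that the lower bound argument never uses the condition on $v_i$, so the edges of $G$ joining the $v_i$'s are irrelevant. This is what allows the bound to be tight and independent of the structure of $G$, and it is the reason the $P_4$-based intuition (each arm behaves like $P_4$, whose Italian domination number is $3$ by Theorem~\ref{thm1}) translates into a clean bound.
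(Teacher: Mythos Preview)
Your proof is correct and follows essentially the same approach as the paper: both decompose $(G\odot K_1)\odot K_1$ into the $n$ vertex-disjoint arms $v_i'v_iu_iu_i'$, argue that any IDF must place weight at least $3$ on each arm, and exhibit an explicit IDF of weight $3n$. The only cosmetic differences are that the paper uses the IDF assigning $1$ to each of $v_i', u_i, u_i'$ (rather than your $f(v_i)=2$, $f(u_i')=1$), and your case analysis for the lower bound is more explicit than the paper's informal argument.
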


\begin{proof}
	Let $ G $ be a graph with vertex set $ V(G)= {v_1,v_2,...,v_n} $ and let $ u_i $ be the leaf neighbor of $ v_i $ in $ G\odot K_1. $ Let $ v_i' $ and $ u_i' $ be the leaf neighbors of $ v_i $ and $ u_i $ respectively, in $ (G\odot K_1)\odot K_1. $ There are $ n $ vertex disjoint $ P_4's,\  \ v_i'v_iu_iu_i'$ for $ i=1,2,...,n $ in $ (G\odot K_1)\odot K_1. $ Let $ f $ be an IDF on $ (G\odot K_1)\odot K_1. $ Then the 2 pendent vertices $ v_i' $ and $ u_i' $ in each $ P_4 $ should be either in $ V_1^f $ or adjacent to a vertex in $ V_2^f. $ If all the pendent vertices are in $ V_1^f, $ then to Italian dominate non-pendent vertices $ v_i $ and $ u_i $ we need more vertices with non-zero weight in $ P_4. $ The pendent vertices have no common neighbors. Hence, under $ f $, the sum of the values of vertices in each $ P_4 $ must be at least 3. Therfore, $ f(V) \geq 3n. $
	To prove the reverse inequality, define $ g $ as follows.
	\[
	g(u) =
	\begin{cases}
	1,\  \ for\ u=v_i',u_i',u_i,\ \ i= 1,2,...,n,\\
	0,\  \ otherwise.\\
	\end{cases}
	\]
	Then $ g $ is an IDF on $ (G\odot K_1)\odot K_1  $ with $ g(V) =3n. $ Hence the theorem.
\end{proof}

\begin{thm}
	
$	\gamma_I(P_n \odot K_1) =   \lceil \frac{4n}{3} \rceil. $

\end{thm}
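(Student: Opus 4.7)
The plan is to establish matching upper and lower bounds. For the upper bound, I would partition $V(P_n)$ into consecutive blocks of three and, on each block $\{v_{3j+1},v_{3j+2},v_{3j+3}\}$ together with its pendants, use the pattern $f(v_{3j+2})=2$, $f(u_{3j+1})=f(u_{3j+3})=1$, all remaining values $0$. This gives weight $4$ per block, and one checks that $v_{3j+1}$, $v_{3j+3}$ and $u_{3j+2}$ are Italian-dominated via the value-$2$ central vertex. Consecutive blocks interact cleanly because the boundary path vertices $v_{3j+3}$ and $v_{3(j+1)+1}$ each remain adjacent to a value-$2$ vertex in their own block. When $n\equiv 1\pmod 3$ I would patch the configuration with $f(v_n)=2$ (cost $+2$); when $n\equiv 2\pmod 3$ I would patch with $f(v_n)=2,\ f(u_{n-1})=1$ (cost $+3$). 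The resulting weights $4k$, $4k+2$, $4k+3$ for $n=3k,3k+1,3k+2$ all equal $\lceil 4n/3\rceil$.

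For the lower bound let $f$ be a $\gamma_I$-function. By Lemma~\ref{lem1} I may assume $f(u_i)\in\{0,1\}$ for every pendant $u_i$; since the only neighbor of $u_i$ is $v_i$, if $f(u_i)=0$ then $f(v_i)=2$. Setting $W_i=f(v_i)+f(u_i)$, this gives $W_i\in\{1,2,3\}$ with $W_i=1$ precisely when $(f(v_i),f(u_i))=(0,1)$, and hence $W_i\ge 2$ iff $f(v_i)\ge 1$. The key structural observation is this: if $W_i=1$ then $v_i$ has weight $0$ and the Italian-domination inequality $f(v_{i-1})+f(v_{i+1})+f(u_i)\ge 2$ collapses to $f(v_{i-1})+f(v_{i+1})\ge 1$ (the sole term $f(v_2)$ or $f(v_{n-1})$ at an endpoint), so at least one path-neighbor $v_j$ of $v_i$ satisfies $f(v_j)\ge 1$ and therefore $W_j\ge 2$.

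I would finish by a discharging argument: assign each $v_i$ initial charge $W_i$, and let every $v_j$ with $W_j\ge 2$ send charge $1/3$ to each of its path-neighbors having $W=1$. A vertex with $W_j\ge 2$ has at most two path-neighbors and so parts with at most $2/3$, retaining $W_j-2/3\ge 4/3$; a vertex with $W_i=1$ receives at least $1/3$ by the observation above, reaching $4/3$. Summing, $f(V)=\sum_i W_i\ge 4n/3$, and integrality yields $f(V)\ge\lceil 4n/3\rceil$. The main technical hurdle is the structural claim justifying the discharging rule---that every path-neighbor $v_j$ used to Italian-dominate a $W=1$ vertex automatically satisfies $W_j\ge 2$---which in turn rests on the pendancy-forced implication $f(u_j)=0\Rightarrow f(v_j)=2$, ensuring the $1/3$-ration is always available wherever it is needed.
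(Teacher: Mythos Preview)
Your argument is correct. The upper-bound construction is essentially the paper's: place weight $2$ on every third path vertex and weight $1$ on the uncovered pendants, with the same end-patches in the residue classes $n\equiv 1,2\pmod 3$.

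The lower bound, however, is obtained by a genuinely different method. The paper argues by counting: if $p$ pendants carry weight $1$ then the remaining $n-p$ force their path neighbours into $V_2$, these $n-p$ weight-$2$ vertices dominate at most $3(n-p)$ path vertices, and the residual $3p-2n$ path vertices require additional weight, yielding $g(V)\ge p+2(n-p)+\tfrac{3p-2n}{3}=\tfrac{4n}{3}$. Your route instead localises the weight as $W_i=f(v_i)+f(u_i)$ and runs a discharging rule (each $W_j\ge 2$ column donates $1/3$ to adjacent $W=1$ columns). What your approach buys is a cleaner and fully rigorous bookkeeping: the pendancy implication $f(u_i)=0\Rightarrow f(v_i)=2$ makes the equivalence $W_j\ge 2\iff f(v_j)\ge 1$ automatic, so the donor always has the required $1/3$ to spare, and summing the final charges gives $f(V)\ge 4n/3$ without any case analysis on $p$ or on the sign of $3p-2n$. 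The paper's counting argument is shorter to state but leaves the step ``to Italian dominate these $3p-2n$ vertices we need at least $\tfrac{3p-2n}{3}$ more vertices of weight $1$'' somewhat informal; your discharging avoids that issue entirely.
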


\begin{proof}
Let $ v_1,v_2,...,v_n $ be the vertices of $ P_n $ and let $ u_i $ be the pendent vertex corresponding to $ v_i $, for $ i=1,2,...n $. If possible assume that there exists a $ \gamma_I$-function $ g $ of $ P_n \odot K_1  $ such that $ g(V)<\frac{4n}{3}. $ Note that we can always find a $ \gamma_I$-function in which pendent vertices are assigned values either $ 0 $ or $ 1 $ by Lemma \ref{lem1}. Out of $ n $ pendent vertices in $ P_n \odot K_1 $ let $ p $ vertices be in $ V_1^g, $ so that the remaining  $ n-p $ pendent vertices are assigned value $ 0 $ and hence adjacent to vertices in $ V_2^g. $ i.e., $ n-p $ vertices in $ P_n $ are assigned the value $ 2. $ These $ n-p $ vertices can Italian dominate atmost $ 3(n-p) $ vertices of $ P_n. $ i.e., at least $ n-(3(n-p)) = 3p-2n $ vertices are not yet Italian dominated. To Italian dominate these $ 3p-2n $ vertices we need atleast $ \frac{3p-2n}{3} $ more vertices of weight $ 1 $ in $ P_n. $ Therefore, $ g(V) > p+2(n-p)+\frac{3p-2n}{3}.$ i.e., $ g(V)> \frac{4n}{3}$, which is a contradiction. So $ g(V)\geq \frac{4n}{3}. $
 Define an IDF of $ P_n \odot K_1  $ as follows.\\
When $ n=3k. $
\[
f(u) =
\begin{cases}
2,\  \ if\ u=v_{3j-1},\ for\ j=1,2,...,k,\\
1,\ \ if\ u=u_j,\ for\ every\ j\ such\ that\ f(v_j) \neq 2,\\
0,\  \ otherwise.\\
\end{cases}
\]
Then $ f $ is an IDF with $ f(V)= \frac{4n}{3}. $\\
So that, $ \gamma_I(P_n \odot K_1) \leq \frac{4n}{3}. $ Therefore, $ \gamma_I(P_n \odot K_1) = \frac{4n}{3}. $ \\
When $ n=3k+1. $
\[
f(u) =
\begin{cases}
2,\  \ if\ u=v_n, \ or\ v_{3j-1}\ for\ j=1,2,...,k,\\
1,\ \ if\ u=u_j,\ for\ every\ j\ such\ that\ f(v_j) \neq 2,\\
0,\  \ otherwise.\\
\end{cases}
\] 	
Then $ f $ is an IDF with $ f(V)= \frac{4n+2}{3}. $ So that, $ \gamma_I(P_n \odot K_1) \leq \frac{4n+2}{3}. $ Therefore, $ \gamma_I(P_n \odot K_1) = \frac{4n+2}{3}. $ \\
When $ n=3k+2. $
\[
f(u) =
\begin{cases}
2,\  \ if\ u=v_{3k+2},\ k=0,1,2,...\\
1,\ \ if\ u_j \ with\ f(v_j) \neq 2,\\
0,\  \ otherwise.\\
\end{cases}
\] 	
Then $ f $ is an IDF with $ f(V)= \frac{4n+1}{3}. $ So that, $ \gamma_I(P_n \odot K_1) \leq \frac{4n+1}{3}. $ Therefore, $ \gamma_I(P_n \odot K_1) = \frac{4n+1}{3}. $ \\
Hence, $\gamma_I(P_n \odot K_1) =   \lceil \frac{4n}{3} \rceil. $

\end{proof}

\begin{thm}
$ \gamma_I(C_n \odot K_1) = \lceil \frac{4n}{3} \rceil. $
	
\end{thm}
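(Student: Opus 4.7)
The strategy is to mirror the proof of the preceding theorem for $P_n \odot K_1$, making only the adjustments needed for the cyclic adjacency $v_1 v_n$. For the lower bound, I would apply Lemma \ref{lem1} to restrict attention to $\gamma_I$-functions $g$ in which each pendent vertex $u_i$ receives value $0$ or $1$. Let $p$ be the number of pendents with $g(u_i) = 1$; the remaining $n-p$ pendents are assigned $0$ and so must be adjacent to a vertex of $V_2^g$, forcing $n-p$ vertices of $C_n$ to have weight $2$. Every vertex of $C_n$ has degree $2$, so each weight-$2$ vertex italian-dominates at most $3$ cycle vertices, leaving at least $3p-2n$ vertices of $C_n$ which still need further weight, contributing at least $\frac{3p-2n}{3}$. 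Summing gives $g(V) \geq p + 2(n-p) + \frac{3p-2n}{3} = \frac{4n}{3}$, and integrality forces $g(V) \geq \lceil \frac{4n}{3} \rceil$. Only the degree bound on the base graph enters, so the argument transfers from $P_n$ to $C_n$ without change.

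For the upper bound I would give three constructions, one per residue of $n$ modulo $3$, each placing weight $2$ on roughly every third cycle vertex and weight $1$ on each pendent whose cycle neighbor has weight $0$. When $n = 3k$, set $f(v_{3j-1}) = 2$ for $j = 1, \ldots, k$; the seam is covered automatically since $v_1$ is adjacent to $v_2$ and $v_n = v_{3k}$ is adjacent to $v_{3k-1}$. When $n = 3k+1$, the scheme on $v_2, v_5, \ldots, v_{3k-1}$ would leave a length-$3$ gap around the seam, so I additionally set $f(v_n) = 2$; then $v_1$ is italian-dominated by $v_2$ and $v_{3k+1}$, and $v_{3k}$ by $v_{3k-1}$ and $v_{3k+1}$. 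When $n = 3k+2$, I extend the pattern to $f(v_{3j-1}) = 2$ for $j = 1, \ldots, k+1$, whose last term is $v_{3k+2}$, adjacent to $v_1$, closing the seam. In every case pendents with weight $1$ are exactly those whose cycle neighbor has weight $0$, and a direct count yields total weights $\frac{4n}{3}$, $\frac{4n+2}{3}$, $\frac{4n+1}{3}$ respectively, each equal to $\lceil \frac{4n}{3} \rceil$.

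The only nontrivial point is verifying that the cyclic adjacency does not leave an undominated vertex at the seam for $n \equiv 1, 2 \pmod{3}$, but the constructions above were chosen precisely so that the final weight-$2$ vertex sits at or near $v_n$, keeping the cyclic gap between consecutive weight-$2$ vertices at most $2$. The rest is routine verification and arithmetic, and combined with the lower bound it yields the claimed value $\gamma_I(C_n \odot K_1) = \lceil \frac{4n}{3} \rceil$.
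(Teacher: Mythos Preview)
Your proposal is correct and follows exactly the approach the paper intends: its proof of this theorem is literally ``The proof is similar to that of $P_n$,'' and you have faithfully carried out that transfer, reproducing the same counting lower bound (which uses only the degree bound $\leq 2$ on the base graph) and the same three residue-class constructions, with the appropriate check that the cyclic seam is covered. Nothing further is needed.
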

\begin{proof}
The proof is similar to that of $ P_n. $
\end{proof}

\section{Addition of twin vertex}
In this section, we discuss the impact of addition of twin  vertices to a graph $ G $ on the Italian domination number of a graph.

\begin{lem} \label{lem2}
	Let $ u $ and $ u' $ be true twins in a graph G. Then there exists a $ \gamma_I$-function of $ G $ in which $ f(u')=0$. 
\end{lem}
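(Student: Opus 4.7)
The plan is to take any $\gamma_I$-function $f$ of $G$ and modify it into another $\gamma_I$-function $\tilde f$ with $\tilde f(u')=0$. The essential structural input is that $u$ and $u'$ being true twins gives $N[u]=N[u']$; in particular $u$ and $u'$ are adjacent and every other vertex $v$ of $G$ satisfies $v\in N(u)$ iff $v\in N(u')$. Thus $u$ and $u'$ are interchangeable in the IDF constraint at any vertex outside $\{u,u'\}$: only the combined weight $f(u)+f(u')$ matters to such a vertex, not the individual values.

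I would split on $f(u)$. If $f(u)=0$, I would simply swap, setting $\tilde f(u)=f(u')$, $\tilde f(u')=0$, and $\tilde f=f$ elsewhere. The IDF condition at each $v\neq u,u'$ is preserved by the interchangeability above; and the condition at $u'$ now reads $\sum_{w\in N(u')}\tilde f(w)\ge 2$, which is identical to the original condition at $u$ because $N(u')\setminus\{u\}=N(u)\setminus\{u'\}$ and $\tilde f(u)=f(u')$. So $\tilde f$ is a $\gamma_I$-function with $\tilde f(u')=0$.

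If $f(u)\ge 1$, I would merge by setting $\tilde f(u)=2$, $\tilde f(u')=0$, and $\tilde f=f$ elsewhere. The vertex $u'$ is then Italian-dominated by its weight-$2$ neighbour $u$; and for every $v\neq u,u'$ with $\tilde f(v)=0$ that is adjacent to $u$ (equivalently to $u'$) the $\{u,u'\}$-contribution is already $2$, while for $v$ not adjacent to $u$ the sum at $v$ is unchanged. Hence $\tilde f$ is an IDF with $\tilde f(V)=f(V)-f(u)-f(u')+2$. We may assume $f(u')\ge 1$ (otherwise we are already done), so $f(u)+f(u')\ge 2$ and $\tilde f(V)\le f(V)$; minimality of $f$ forces $f(u)=f(u')=1$, and $\tilde f$ is the required $\gamma_I$-function.

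The only place where care is genuinely needed is the IDF check in the merge step: one must verify that collapsing the pair $\{u,u'\}$ from combined weight $f(u)+f(u')$ down to $2$ does not starve any $v\in N(u)$ with $\tilde f(v)=0$. This is immediate because a single weight-$2$ vertex already Italian-dominates all of its neighbours, so the two cases above exhaust everything and the argument reduces to routine bookkeeping with no serious combinatorial obstruction.
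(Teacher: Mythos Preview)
Your proof is correct and follows essentially the same approach as the paper: both exploit $N[u]=N[u']$ to either swap the values at $u,u'$ or merge them into a single weight-$2$ vertex at $u$. Your case split on $f(u)$ with a unified merge argument is marginally cleaner than the paper's split on $f(u')$ into subcases, but the underlying operations and verifications are identical.
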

	\begin{proof}
		Let $ f $ be a $ \gamma_I $-function of G. If $ f(u')=2, $ then $ f(u)=0, $ due to the minimality of $ f. $ Now, reassign $ f(u)=2 $ and $ f(u')=0 ,$ so that $ f $ is a $ \gamma_I- $function of $ G $ with the required property.\\
		
		If $ f(u')=1 $ then $ f(u) $ can be either $ 1 $ or $ 0. $ Note that due to the minimality it can not be $ 2. $ If $ f(u)=1 $ then we can reassign $ f(u)=2 $ and $ f(u')=0 $ so that $ f $ is still a $ \gamma_I $-function of $ G $ with the required property. If $ f(u)=0 $ then to Italian dominate $ u $ there esiats a $ v \in N(u) $ such that $ f(v)=1. $ Since $ N(u)=N(u'), $ in this case also we can interchange the weights of $ u $ and $ u' $ to get a $ \gamma_I$-function in which $ f(u')=0. $ Hence the lemma.
\end{proof}
\begin{thm}
	Let $ G $ be a graph and $ u \in V(G). $ Let $ H $ be the graph obtained from $ G $ by attaching a true twin $ u' $ to $ u. $ Then $ \gamma_I(H)=\gamma_I(G) $ or $ \gamma_I(G) +1$.
\end{thm}
\begin{proof}
	Let $ f $ be a $ \gamma_I $-function of $ G. $ If $ u \in V_0^f \cup V_2^f $ then $ f $ can be extended to an IDF of $ H $ by assigning $ 0 $ to $ u' $ so that
	\begin{equation} \label{a}
	\gamma_I(H) \leq \gamma_I(G).
	\end{equation}
	
	If $ u \in V_1^f $ and there exists $ v \in N(u) $ such that weight of $ v $ not equal to $ 0 $ then $ f $ can be extended to an IDF of $ H $ by assigning $ 0 $ to $ u' $ so that
	\begin{equation} \label{b}
	\gamma_I(H) \leq \gamma_I(G).
	\end{equation}
	Now assume that there does not exist any $ \gamma_I$-function of $ G $ for which  $ u \in V_0^f \cup V_2^f $ or $ |N(u)\cap (V_1^f\cup V_2^f)| \geq 1, $ then we can extend $ f $ to an IDF of $ H $ by assigning $ 1 $ to $ u' $ so that
	\begin{equation} \label{c}
	\gamma_I(H) \leq \gamma_I(G) +1.
	\end{equation}
	Let $ g $ be a $ \gamma_I $-function of $ H. $ Then by Lemma \ref{lem2} there exists an IDF $ g $ in which $ g(u')=0. $ Then the restriction of $ g $ to $ V(G) $ is an IDF of $ G $ so that
	\begin{equation} \label{d}
	\gamma_I(G)\leq \gamma_I(H)
	\end{equation}
	The weight of $ u $ in $ H $ can be $ g(u)= 0,1 $ or $ 2. $ If $ g(u)=2, $ all the vertices in the neighborhood of $ u $ other than $ u' $ are Italian dominated by some other vertices in $ H. $ Then the restriction of $ g $ to $ G $ by assigning weight $ 1 $ to $ u $ is an IDF of $ G. $ Therefore, $ \gamma_I(G)\leq \gamma_I(H)-1. $ i.e.,
	\begin{equation} \label{e}
	\gamma_I(G) +1\leq \gamma_I(H).
	\end{equation}
	 From equations \eqref{a}, \eqref{b}, \eqref{c}, \eqref{d} and \eqref{e}, we get,
	 $ \gamma_I(H)=\gamma_I(G) $ or $ \gamma_I(G) +1.$
\end{proof}
\begin{lem} \label{lem3}
	Let $ u $ and $ u' $ be  false twins in a graph $ G. $ Then there exists a $ \gamma_I$-function $ f $ of $ G $ in which $ f(u')\neq 2. $
\end{lem}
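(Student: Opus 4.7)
The plan is to mimic the approach of Lemma \ref{lem2} but with care taken for the fact that false twins are non-adjacent, so $u \notin N(u')$ and $u' \notin N(u)$. Start with an arbitrary $\gamma_I$-function $f$; if $f(u')\neq 2$ there is nothing to show, so assume $f(u')=2$ and aim to modify $f$ into a $\gamma_I$-function with $f(u')\neq 2$.

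The first step is to reduce to the case $f(u)=0$, using the minimality of $f$. If $f(u)=2$, then lowering $f(u)$ to $1$ still Italian-dominates every common neighbor of $u,u'$ in $V_0^f$ (the combined contribution from $\{u,u'\}$ drops from $4$ to $3$), and nothing else is affected, contradicting the minimality of $f$. If $f(u)=1$, then lowering $f(u')$ to $1$ keeps the combined contribution at $1+1=2$ on every common neighbor in $V_0^f$, again contradicting minimality. Hence $f(u)=0$.

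With $f(u)=0$ and $f(u')=2$, the Italian condition at $u$ gives $\sum_{v\in N(u)}f(v)\geq 2$, and crucially this sum does not involve $u'$, since $u' \notin N(u)$. The finishing step is to swap values, setting $f'(u)=2$, $f'(u')=0$, and $f'=f$ elsewhere. The weight is preserved, so $f'$ is still minimum provided it is a valid IDF. At any common neighbor $v$ with $f(v)=0$, the contribution from $\{u,u'\}$ changes from $0+2$ to $2+0$, still meeting the Italian condition; at $u'$ itself, $\sum_{v\in N(u')}f'(v)=\sum_{v\in N(u)}f(v)\geq 2$ holds by the preceding observation; and no other vertex is touched.

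The main subtlety relative to the true-twin case is precisely that the direct swap is not immediately valid --- one needs minimality to first force $f(u)=0$, which then supplies the bound that makes the swap legitimate. I expect this minimality reduction to be the only step that requires any real thought; once $f(u)=0$ is in hand, the swap is bookkeeping.
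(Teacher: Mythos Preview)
Your proof is correct and follows essentially the same route as the paper's: assume $f(u')=2$, use minimality to force $f(u)=0$, then swap the values of $u$ and $u'$. The paper is terser---it asserts $f(u)=0$ ``by the minimality of $f$'' without the case analysis you give, and it justifies the swap only by noting $N(u)=N(u')$---whereas you spell out explicitly why the Italian condition survives at $u'$ after the swap (because $u'\notin N(u)$, the bound $\sum_{v\in N(u)}f(v)\geq 2$ already excludes the swapped coordinate).
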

\begin{proof}
	Let $ f $ be a $ \gamma_I $-function with $ f(u')=2. $ Then $ f(u)=0, $ by the minimality of $ f. $ If $ f(u)=0 $ then there exists a $ v \in N(u) $ such that $ f(v)=2 $ or two vertices $ x,y \in N(u) $ such that $ f(x)=f(y)=1. $ Since, $ u $ and $ u' $ have the same neighborhood, exchange weights of $ u $ and $ u'. $ Then we get a $ \gamma_I$-function with same weight and $ f(u')=0. $
\end{proof}
\begin{thm}
	Let $ G $ be a graph and $ u\in V(G). $ Let $ H $ be a graph obtained from $ G $ by attaching a false twin $ u' $ to $ u. $ Then $ \gamma_I(H)=\gamma_I(G) $ or $ \gamma_I(G) +1.$
\end{thm}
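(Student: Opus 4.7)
The plan is to mirror the structure of the preceding true-twin theorem and prove two inequalities separately: $\gamma_I(H) \leq \gamma_I(G) + 1$ by extending a $\gamma_I$-function of $G$, and $\gamma_I(G) \leq \gamma_I(H)$ by restricting a $\gamma_I$-function of $H$. The key structural fact I will exploit throughout is that, because $u$ and $u'$ are false twins, $N_H(u) = N_H(u') = N_G(u)$; in particular $u$ and $u'$ are non-adjacent in $H$ and both of them must be Italian dominated through the common neighborhood $N_G(u)$.

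For the upper bound, I would start with a $\gamma_I$-function $f$ of $G$ and split into cases on $f(u)$. If $f(u)=0$, the IDF condition forces $\sum_{v \in N_G(u)} f(v) \geq 2$, so extending by $f(u')=0$ already gives an IDF of $H$ of the same weight. If $f(u)=2$, I would reassign $f(u)=f(u')=1$; every weight-zero neighbor of $u$ continues to receive total weight $2$, now from the pair $\{u,u'\}$ jointly, and no other vertex is affected. If $f(u)=1$, then either $\sum_{v \in N_G(u)} f(v) \geq 2$ already (extend by $f(u')=0$) or we are forced to set $f(u')=1$, paying an extra unit of weight.

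For the lower bound, I would take a $\gamma_I$-function $g$ of $H$ and invoke Lemma \ref{lem3} to assume $g(u') \in \{0,1\}$. If $g(u')=0$, the restriction $g|_{V(G)}$ is immediately an IDF of $G$ of the same weight, since $u'$ contributed nothing to any Italian-domination sum. If $g(u')=1$, I would define $g'$ on $V(G)$ by $g'(v)=g(v)$ for $v \neq u$ and $g'(u)=\max(g(u),1)$, which transfers at most one unit of weight from $u'$ to $u$ and keeps the total at most $\gamma_I(H)$. The point is that since $u$ and $u'$ have the same neighborhood in $H$, any contribution $u'$ made to a neighbor $v$ can be reproduced in $G$ by $u$.

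The main obstacle is the verification of the IDF condition for $g'$ in the subcase $g(u)=0$, $g(u')=1$, where a vertex $v \in N_G(u)$ with $g(v)=0$ loses the contribution of $u'$ when we pass from $H$ to $G$. The check reduces to the identity $\sum_{w \in N_G(v)} g'(w) = \sum_{w \in N_H(v)} g(w) - g(u') + (g'(u)-g(u))$, which together with the IDF property of $g$ in $H$ shows the sum is at least $2-1+1 = 2$. Combining the two inequalities will yield $\gamma_I(G) \leq \gamma_I(H) \leq \gamma_I(G)+1$, which is exactly the claim of the theorem.
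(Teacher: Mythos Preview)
Your overall strategy---extend a $\gamma_I$-function of $G$ to $H$ for the upper bound, and restrict a $\gamma_I$-function of $H$ to $G$ for the lower bound via Lemma~\ref{lem3}---is the same as the paper's, and your upper-bound analysis (in particular the trick of splitting $f(u)=2$ into $f(u)=f(u')=1$) is in fact a bit sharper than what the paper writes.

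However, there is a genuine gap in your lower-bound argument. In the case $g(u')=1$ you set $g'(u)=\max(g(u),1)$ and then verify the IDF condition only in the subcase $g(u)=0$. The subcase $g(u)=1$ is \emph{not} easier: here $g'(u)=\max(1,1)=1=g(u)$, so $g'$ is simply the restriction $g|_{V(G)}$, and a vertex $v\in N_G(u)$ with $g(v)=0$ loses the contribution of $u'$ without any compensation. Concretely, take $G=K_2$ with vertices $u,v$; then $H$ is the path $u\,v\,u'$, and $g(u)=g(u')=1$, $g(v)=0$ is a $\gamma_I$-function of $H$. Your $g'$ is $g'(u)=1$, $g'(v)=0$, which has $\sum_{w\in N_G(v)} g'(w)=1<2$ and so is not an IDF of $G$.

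The fix is immediate: replace $\max(g(u),1)$ by $\min(g(u)+g(u'),\,2)$ (equivalently, when $g(u')=1$ set $g'(u)=g(u)+1$ unless $g(u)=2$). One checks that the weight of $g'$ is still at most $\gamma_I(H)$ in every subcase, and your displayed identity
\[
\sum_{w\in N_G(v)} g'(w)\;=\;\sum_{w\in N_H(v)} g(w)\;-\;g(u')\;+\;\bigl(g'(u)-g(u)\bigr)
\]
now gives a nonnegative correction term in all subcases (and when $g(u)=2$ one simply notes $u\in N_G(v)$ already forces the sum to be at least $2$). With this change your argument is complete and in fact handles a subcase that the paper's own proof glosses over.
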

\begin{proof}
	Let $ f $ be a $ \gamma_I$-function of $ G. $ If $ u \in V_0^f $ or $ |N(u)\cap V_2^f|\geq 1 $ or $ |N(u)\cap V_1^f| \geq2 $ then $ f $ can be extended to an IDF of $ H $ by assigning $ 0 $ to $ u' $ so that
	\begin{equation} \label{1}
	 \gamma_I(H)\leq \gamma_I(G).   \\
	\end{equation}
	Now, assume that there does not exist any $ \gamma_I$-function of $ G $ for which any of the above conditions are satisfied. Then we can extend $ f $ to an IDF of $ H $ by assigning $ 1 $ to $ u' $ so that
	\begin{equation} \label{2}
	\gamma_I(H) \leq \gamma_I(G)+1. \\
	\end{equation}
	Let $ g $ be a $ \gamma_I$-function of $ H. $ Then by Lemma \ref{lem3} there exists a $ \gamma_I$-function with $ g(u')\neq 2. $ Therefore, $ g(u')=1 $ or $ 0. $ If $ g(u')=0 $ then the restriction of $ g $ to $ G $ is an IDF of $ G.  $ Therefore,
	\begin{equation} \label{3}
	  \gamma_I(G)\leq \gamma_I(H) \\
	\end{equation}
	If $ g(u')=1 $, but all the neighbors of $ u' $ are Italian dominated by some other vertices (i.e., $ u' $ is assigned value $ 1 $ to Italian dominate itself), then the restriction of $ g $ to $ G $ will be an IDF with $ \gamma_I(G)\leq \gamma_I(H)-1. $ i.e., \begin{equation} \label{4}
	\gamma_I(G)+1 \leq \gamma_I(H)  .
	\end{equation}  From equations \eqref{1}, \eqref{2}, \eqref{3} and \eqref{4}, we get,
	$ \gamma_I(H)=\gamma_I(G) $ or $ \gamma_I(G) +1.$
\end{proof}

\section{Conclusion}

In this paper, the impact of corona operator and addition of twins on the Italian domination number is studied. The following problems may also be worth investigating.\\

\ni{\bf Problem 1:} The effect of other graph operations on Italian domination number.\\

\ni{\bf Problem 2:} The effect of corona operator on Italian domination number of some other graph classes.\\

\ni{\bf Problem 3:} The effect of corona operator on other domination parameters.\\

{}

\end{document}